\newtheorem{theorem}{Theorem}[section]
\newtheorem{lemma}[theorem]{Lemma}
\newtheorem{cor}[theorem]{Corollary}
\theoremstyle{definition}
\theoremstyle{remark}
\newtheorem{remark}[theorem]{\bf{Remark}}
\numberwithin{equation}{section}
\begin{document}

\title [ Improvement of $A$-numerical radius inequalities ]{ Improvement of $A$-numerical radius inequalities of semi-Hilbertian space operators }

\author[P. Bhunia, R.K. Nayak and K. Paul]{Pintu Bhunia, Raj Kumar Nayak and Kallol Paul}

\address{(Bhunia) Department of Mathematics, Jadavpur University, Kolkata 700032, India}
\email{pintubhunia5206@gmail.com}

\address{(Nayak) Department of Mathematics, Jadavpur University, Kolkata 700032, India}
\email{rajkumarju51@gmail.com}

\address{(Paul) Department of Mathematics, Jadavpur University, Kolkata 700032, India}
\email{kalloldada@gmail.com;kallol.paul@jadavpuruniversity.in}

\noindent \thanks{First and second authors would like to thank UGC, Govt. of India for the financial support in the form of SRF. Prof. Kallol Paul would like to thank RUSA 2.0, Jadavpur University for the partial support.}
\thanks{}
\thanks{}


\subjclass[2010]{Primary 47A12, Secondary  47A30, 47A63.}
\keywords{ A-numerical radius, A-operator seminorm, A-adjoint operator, Positive operator, Semi-Hilbertian space.}

\maketitle

\begin{abstract}
Let $\mathcal{H}$ be a complex Hilbert space and let $A$ be a positive operator on $\mathcal{H}$. We obtain new bounds for the $A$-numerical radius of operators in semi-Hilbertian space $\mathcal{B}_A(\mathcal{H})$ that generalize and improve on the existing ones.   
Further, we estimate bounds for the $B$-operator seminorm and $B$-numerical radius of $2\times 2$ operator matrices, where $B=\mbox{diag}(A,A)$. The bounds obtained here improve on the existing bounds.
\end{abstract}

\section{Introduction}

\noindent The purpose of the present article is to study the numerical radius inequalities of semi-Hilbertian space operators and operator matrices, which  generalize the classical numerical radius inequalities of complex Hilbert space operators and operator matrices. The motivation comes from the recent papers \cite{Kittaneh:3, BP}. Let us first introduce the following notations and terminologies.\\ 
	
\noindent

 Let $\mathcal{B}(\mathcal{H})$ denote the $\mathbb{C}^*$-algebra of all bounded linear operators on a complex Hilbert space $\mathcal{H}$ with inner product $ \langle .,.\rangle $ and the corresponding norm $\|.\|.$ Let $T\in \mathcal{B}(\mathcal{H}).$ As usual  the Range of $T$ and the Kernel of $T $ are denoted by $R(T)$ and $N(T)$, respectively. By $\overline{\mathcal{R}(T)}$ we denote the norm closure of $\mathcal{R}(T)$. Let $T^*$ be the adjoint of $T$.
The letters $I$ and $O$ are reserved  for the identity operator and the zero operator on $\mathcal{H}$, respectively. Throughout the article, $A \in \mathcal{B}(\mathcal{H})$ is a positive operator and $B = \left(\begin{array}{cc}
	A&O \\
	O&A
	\end{array}\right)$. Clearly, $A$ induces a positive semidefinite sesquilinear form $\langle . , . \rangle_A: \mathcal{H} \times \mathcal{H} \rightarrow \mathbb{C}$, defined by $ \langle x, y \rangle_A = \langle Ax, y \rangle$ for all $x,y\in \mathcal{H}.$ This sesquilinear form induces a seminorm $\|.\|_A: \mathcal{H}\rightarrow \mathbb{R}^+$, defined by $\|x\|_A = \sqrt{\langle x, x \rangle_A}$  for all $ x \in \mathcal{H}.$
Clearly, $\|.\|_A$ is a norm if and only if $A$ is injective and $(\mathcal{H}, \|.\|_A)$ is complete if and only if $R(A)$ is closed in $ \mathcal{H}.$  An operator $R \in \mathcal{B}(\mathcal{H})$ is called an A-adjoint of $T$ if  $\langle Tx, y \rangle_A = \langle x, Ry \rangle_A$ for all $x,y\in \mathcal{H}$. The existence of an A-adjoint of $T$ is not guaranteed. Let $ \mathcal{B}_A(\mathcal{H})$ denote the collection of all operators in $\mathcal{B}(\mathcal{H})$, which admit A-adjoints.  By Douglas theorem \cite{Douglas}, it follows that 
$$ \mathcal{B}_A(\mathcal{H}) = \{ T \in \mathcal{B}(\mathcal{H}) : R(T^*A) \subseteq R(A) \}.$$ 
If $T\in \mathcal{B}_A(\mathcal{H})$ then the operator equation $AX=T^*A$ has a unique solution, denoted by $T^{\sharp_A}$, satisfying $\mathcal{R}(T^{\sharp_A})\subseteq \overline{\mathcal{R}(A)}$. Note that $T^{\sharp_A}=A^\dag T^*A$, where $A^\dag$ is the Moore-Penrose inverse of $A$. Clearly, $A^{\sharp_A} = A. $
For $T \in \mathcal{B}_A(\mathcal{H})  $,  we have $AT^{\sharp_A}= T^*A$ and $ N(T^{\sharp_A}) = N(T^*A).$ Note that, if $T \in  \mathcal{B}_A(\mathcal{H})$ then $T^{\sharp_A} \in  \mathcal{B}_A(\mathcal{H})$  and $(T^{\sharp_A})^{\sharp_A} = PTP$, where $P$ is the orthogonal projection onto $\overline{R(A)}$.
For further study on the A-adjoint of an operator, we refer the interested readers to \cite{Arias1}.  Let us now define the $A$-operator seminorms on $\mathcal{B}_A(\mathcal{H})$. Let $T \in \mathcal{B}_{A}(\mathcal{H})$. The $A$-operator seminorm of $T$, denoted by $\|T\|_A$, is defined as
	\begin{eqnarray*}
		 \|T\|_A =  \sup \{\|Tx\|_A: x\in \mathcal{H}, \|x\|_A= 1\}.
	\end{eqnarray*}
Clearly, $\|TT^{\sharp_A}\|_A = \|T^{\sharp_A}T\|_A = \|T^{\sharp_A}\|^2_A = \|T\|^2_A. $
The A-minimum norm of $T$, denoted by $m_A(T)$, is defined as 
\[m_A(T)=  \inf \{\|Tx\|_A : x\in \mathcal{H}, \|x\|_A= 1\}.\]
The $A$-numerical range and the $A$-numerical radius of $T$, denoted by $W_A(T)$ and $w_A(T)$, respectively, are defined as
\begin{eqnarray*}
W_A(T) &=& \{\langle Tx, x \rangle_A: x\in \mathcal{H}, \|x\|_A= 1 \},\\
w_A(T) &=& \sup \{|\langle Tx, x \rangle_A|: x\in \mathcal{H}, \|x\|_A= 1 \}.
\end{eqnarray*}
For $T \in \mathcal{B}_{A}(\mathcal{H})$, we also have $$\|T\|_A = \sup \{|\langle Tx, y \rangle_A|: x,y\in \mathcal{H}, \|x\|_A=\|y\|_A= 1 \}.$$
In particular, if we consider $A=I$ in the definitions of $A$-operator seminorm, A-minimum norm and A-numerical radius then we have the classical operator norm, minimum norm and  numerical radius, respectively, i.e., $\|T\|_A=\|T\|$, $m_A(T)=m(T)$ and $w_A(T)=w(T)$.
 It is well-known that $w_A(.)$ and $\|.\|_A$  are equivalent seminorms on $\mathcal{B}_{A}(\mathcal{H})$, satisfying the following inequality
\[ \frac{1}{2} \|T\|_A \leq w_A(T) \leq \|T\|_A.\]
Recently many eminent mathematicians have studied  $A$-numerical radius inequalities, we refer the interested readers to \cite{ RajAOT, Pintu1, Moslehian, Xu} and references therein.\\
\noindent This article is seperated into three sections, including the introductory one. In the second section, we develop inequalities for the $A$-numerical radius of operators in $\mathcal{B}_{A}(\mathcal{H})$. The inequalities obtained here generalize and improve on the inequalities in \cite{Zamani}. In particular, we show that if $T\in \mathcal{B}_{A}(\mathcal{H})$ then the following inequalities hold
\begin{eqnarray*}
w^2_A(T) \leq \min_{0\leq \alpha\leq 1} \left \|\alpha T^{\sharp_A}T + (1- \alpha)TT^{\sharp_A}\right \|_A,
\end{eqnarray*}
\[ w^2_A(T) = \min_{0\leq \alpha \leq 1} \left \{\frac{\alpha}{2} w_A(T^2) +\left\| \frac{\alpha}{4}   TT^{\sharp_A}+\left(1-\frac{3\alpha}{4}\right)  T^{\sharp_A}T  \right\|\right \}\] 
and
\[w^2_A(T) = \min_{0\leq \alpha \leq 1} \left \{\frac{\alpha}{2} w_A(T^2) +\left\|  \left(1-\frac{3\alpha}{4}\right)   TT^{\sharp_A} +\frac{\alpha}{4}  T^{\sharp_A} T \right\|\right \}.\]
In third section we study the inequalities on operator matrices. We obtain bounds for the $B$-numerical radius of $2\times 2$ operator matrices of the form  $\left(\begin{array}{cc}
X&Y \\
Z&W
\end{array}\right)$, where $X, Y,Z,W \in \mathcal{B}_A(\mathcal{H})$. Also, we obtain an upper bound for the $B$-operator seminorm of $\left(\begin{array}{cc}
X&Y \\
Z&W
\end{array}\right)$. We show that the bounds obtained here improve on the existing ones.

\section{\textbf{ Inequalities of operators }}

We begin with the following theorem that gives an upper bound for the A-numerical radius of  bounded linear operators on $\mathcal{H}$ that admits $A$-adjoint.

\begin{theorem} \label{Theorem:1}
Let $T \in  \mathcal{B}_A(\mathcal{H}) $. Then 
\[w^2_A(T) \leq \min_{0\leq \alpha\leq 1} \left \|\alpha T^{\sharp_A}T + (1- \alpha)TT^{\sharp_A}\right \|_A.\]

\end{theorem}

\begin{proof}
	Let $x \in \mathcal{H}$ with $\|x\|_A = 1$. Then for all $ \alpha \in [0,1]$  we get,
	\begin{eqnarray*}
		|\langle Tx, x \rangle_A| & =& \alpha 	|\langle Tx, x \rangle_A| + (1- \alpha) 	|\langle x, T^{\sharp_A}x \rangle_A|\\
	\Rightarrow	|\langle Tx, x \rangle_A| &\leq& \alpha \|Tx\|_A + (1- \alpha) \|T^{\sharp_A}x\|_A\\
		\Rightarrow |\langle Tx, x \rangle_A|^2 &\leq&  \alpha \|Tx\|_A^2 + (1- \alpha) \|T^{\sharp_A}x\|_A^2, ~~~~\mbox{by convexity of $t^2$}\\
	\Rightarrow |\langle Tx, x \rangle_A|^2	&\leq & \alpha \langle Tx, Tx \rangle_A + (1- \alpha) \langle T^{\sharp_A}x, T^{\sharp_A}x \rangle_A\\
	\Rightarrow |\langle Tx, x \rangle_A|^2	& \leq & \alpha \langle T^{\sharp_A}Tx, x \rangle_A + (1- \alpha) \langle TT^{\sharp_A}x, x \rangle_A\\
	\Rightarrow |\langle Tx, x \rangle_A|^2	& \leq &  \big\langle \{\alpha T^{\sharp_A}T +  (1- \alpha)TT^{\sharp_A}\}x, x \big\rangle_A\\
	\Rightarrow |\langle Tx, x \rangle_A|^2	&\leq & \big\|\alpha T^{\sharp_A}T +  (1- \alpha)TT^{\sharp_A}\big\|_A.
	\end{eqnarray*}
	Taking supremum over $\|x\|_A=1,$ we get 
	\[w^2_A(T) \leq  \left \|\alpha T^{\sharp_A}T + (1- \alpha)TT^{\sharp_A}\right \|_A,~~\forall \alpha \in [0,1].\]
	Taking minimum over all $ \alpha \in [0,1]$ we get the desired inequality.
\end{proof}

\begin{remark}\label{rem1}
 In  \cite[Th. 2.10]{Zamani},  Zamani proved that 
\begin{eqnarray}\label{1a}
w^2_A(T) \leq \frac{1}{2} \left \| T^{\sharp_A}T + TT^{\sharp_A} \right \|_A,
\end{eqnarray}
which follows clearly from Theorem \ref{Theorem:1}, in fact, we have

\[w^2_A(T) \leq \min_{0\leq \alpha\leq 1} \left \|\alpha T^{\sharp_A}T + (1- \alpha)TT^{\sharp_A}\right \|_A\leq \frac{1}{2} \left \| T^{\sharp_A}T + TT^{\sharp_A} \right \|_A.\]

Considering 
 $$T =  \left(\begin{array}{ccc}
	0&2&0 \\
	0&0&1\\
	0&0&0
	\end{array}\right)~~ \mbox{and}~~ A =  \left(\begin{array}{ccc}
	1&0&0 \\
	0&2&0\\
	0&0&3
	\end{array}\right)$$  we get, 
\begin{eqnarray*}
\min_{0\leq \alpha\leq 1} \left \|\alpha T^{\sharp_A}T + (1- \alpha)TT^{\sharp_A}\right \|_A &=& 
\frac{6}{5}\\
\mbox{and}~~\frac{1}{2} \left \| T^{\sharp_A}T + TT^{\sharp_A} \right \|_A & = & \frac{4}{3}.
\end{eqnarray*}
Thus, we observe that  Theorem \ref{Theorem:1} is a non-trivial improvement of  \cite[Th. 2.10]{Zamani}.
\end{remark}

To develop the next inequality we need the following lemma.

\begin{lemma} \cite{Aniket} \label{Lemma:3} 
	Let $x, y, e \in \mathcal{H}$ with $\|e\|_A = 1.$ Then
	\begin{eqnarray*}
		|\langle a, e \rangle_A \langle e, b \rangle_A| \leq \frac{1}{2}\big( |\langle a, b \rangle_A| + \|a\|_A\|b\|_A\big).
	\end{eqnarray*}
\end{lemma}

We now obtain the following inequality for the $A$-numerical radius of operators in $\mathcal{B}_A(\mathcal{H})$.

	\begin{theorem} \label{Theorem:4}
		Let $T \in \mathcal{B}_A(\mathcal{H})$ and let $r\geq 1$. Then
	$$ w^{2r}_A(T) \leq \frac{1}{2} w^r_A(T^2) + \frac{1}{2^{r+1}} \left\|TT^{\sharp_A} + T^{\sharp_A}T\right\|^r_A. $$
	\end{theorem}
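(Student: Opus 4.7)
The plan is to reduce the entire inequality to one application of Lemma~\ref{Lemma:5}. Fix $x\in\mathcal{H}$ with $\|x\|_A=1$ and apply that lemma with $a=Tx$, $b=T^{\sharp_A}x$, $e=x$. The pay-off comes from two short identities that follow immediately from $AT^{\sharp_A}=T^{*}A$: first,
$$\langle T^{\sharp_A}x,x\rangle_A=\langle AT^{\sharp_A}x,x\rangle=\langle T^{*}Ax,x\rangle=\overline{\langle Tx,x\rangle_A},$$
so the left-hand side $|\langle Tx,x\rangle_A\langle x,T^{\sharp_A}x\rangle_A|$ of Lemma~\ref{Lemma:5} collapses to $|\langle Tx,x\rangle_A|^{2}$; and second,
$$\langle Tx,T^{\sharp_A}x\rangle_A=\langle Tx,AT^{\sharp_A}x\rangle=\langle Tx,T^{*}Ax\rangle=\langle T^{2}x,x\rangle_A,$$
which turns the first summand on the right-hand side of Lemma~\ref{Lemma:5} into $|\langle T^{2}x,x\rangle_A|$.

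For the remaining factor $\|Tx\|_A\|T^{\sharp_A}x\|_A$, the A-adjoint identities $\|Tx\|_A^{2}=\langle T^{\sharp_A}Tx,x\rangle_A$ and $\|T^{\sharp_A}x\|_A^{2}=\langle TT^{\sharp_A}x,x\rangle_A$ combined with AM--GM yield
$$\|Tx\|_A\,\|T^{\sharp_A}x\|_A\leq \tfrac{1}{2}\langle(TT^{\sharp_A}+T^{\sharp_A}T)x,x\rangle_A.$$
Plugging these three ingredients back into Lemma~\ref{Lemma:5} produces the baseline pointwise estimate
$$|\langle Tx,x\rangle_A|^{2}\leq \tfrac{1}{2}|\langle T^{2}x,x\rangle_A|+\tfrac{1}{4}\langle(TT^{\sharp_A}+T^{\sharp_A}T)x,x\rangle_A.$$

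To reach the exponent $2r$, I would rewrite the right-hand side as the balanced convex combination $\tfrac{1}{2}\alpha+\tfrac{1}{2}\beta$ with $\alpha=|\langle T^{2}x,x\rangle_A|$ and $\beta=\tfrac{1}{2}\langle(TT^{\sharp_A}+T^{\sharp_A}T)x,x\rangle_A$, and then apply convexity of $t\mapsto t^{r}$ for $r\geq 1$ (this is Lemma~\ref{Lemma:1}(a) with weights $\tfrac{1}{2},\tfrac{1}{2}$). This yields
$$|\langle Tx,x\rangle_A|^{2r}\leq \tfrac{1}{2}|\langle T^{2}x,x\rangle_A|^{r}+\tfrac{1}{2^{r+1}}\langle(TT^{\sharp_A}+T^{\sharp_A}T)x,x\rangle_A^{r}.$$
Since $TT^{\sharp_A}+T^{\sharp_A}T$ is A-positive, Cauchy--Schwarz in the A-seminorm gives $\langle(TT^{\sharp_A}+T^{\sharp_A}T)x,x\rangle_A\leq \|TT^{\sharp_A}+T^{\sharp_A}T\|_A$ for any unit A-vector, so raising to the $r$th power and taking the supremum over $\|x\|_A=1$ delivers the desired bound.

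The one point demanding care is the bookkeeping between $\sharp_A$ and the ordinary adjoint $*$ when verifying the two opening identities, together with the check that $\langle T^{\sharp_A}x,x\rangle_A$ is the complex conjugate of $\langle Tx,x\rangle_A$ (so that the product in Lemma~\ref{Lemma:5} really becomes a modulus squared). Once those semi-Hilbertian identities are in hand, the remainder is a mechanical sequence of AM--GM, convexity of $t^{r}$, and passing to the supremum, and I do not foresee any genuine obstacle.
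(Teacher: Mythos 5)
Your proposal is correct and follows essentially the same route as the paper: the same choice $a=Tx$, $b=T^{\sharp_A}x$, $e=x$ in Lemma~\ref{Lemma:5}, the same AM--GM step yielding $\frac14\langle(TT^{\sharp_A}+T^{\sharp_A}T)x,x\rangle_A$, and the same use of convexity of $t^r$ before passing to the supremum. The explicit verification of the identities $\langle x,T^{\sharp_A}x\rangle_A=\langle Tx,x\rangle_A$ and $\langle Tx,T^{\sharp_A}x\rangle_A=\langle T^2x,x\rangle_A$ is a welcome addition that the paper leaves implicit.
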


\begin{proof}
Let $x\in \mathcal{H}$ with $\|x\|_A=1$. Taking $a=Tx$, $ b= T^{\sharp_A}x$ and $ e=x$ in Lemma \ref{Lemma:3}, we get
\begin{eqnarray*}
| \langle Tx, x \rangle_A|^2 &\leq& \frac{1}{2}\left(| \langle T^2x, x \rangle_A|+\|Tx\|_A ~~\|T^{\sharp_A}x\|_A \right)\\
\Rightarrow | \langle Tx, x \rangle_A|&\leq& \frac{1}{2}| \langle T^2x, x \rangle_A|+\frac{1}{4}\left (\|Tx\|^2_A +\|T^{\sharp_A}x\|^2_A \right), ~~\mbox{by AM-GM inequality}\\
\Rightarrow | \langle Tx, x \rangle_A|&\leq & \frac{1}{2}| \langle T^2x, x \rangle_A|+\frac{1}{4} \left \langle \left (TT^{\sharp_A} + T^{\sharp_A}T\right)x, x \right\rangle_A \\
\Rightarrow | \langle Tx, x \rangle_A|&\leq& \frac{1}{2} w_A(T^2) + \frac{1}{4} \|TT^{\sharp_A} + T^{\sharp_A}T\|_A\\
\Rightarrow | \langle Tx, x \rangle|_A^{2r} &\leq&  \frac{1}{2} w^r_A(T^2) + \frac{1}{2^{r+1}} \|TT^{\sharp_A} + T^{\sharp_A}T\|^r_A, ~~\mbox{by convexity of}~~ t^r.
 \end{eqnarray*}
Taking supremum over $\|x\|_A =1$, we get the required inequality.
	\end{proof}

\begin{remark}

\noindent 1. In particular, if we consider $r=1$ in Theorem \ref{Theorem:4} then we get the inequality in \cite[Th. 2.11]{Zamani}.

\noindent 2. In \cite[Th. $2.4$]{SMY}, Sattari et. al. proved that the following numerical radius inequality
	$$ w^{2r}(T) \leq \frac{1}{2} \left(w(T^2) + \|T\|^{2r}\right),$$ for $T \in \mathcal{B}(\mathcal{H})$ and $r\geq 1$.
	Clarly, if we consider $A=I$ then Theorem \ref{Theorem:4} gives better bound than that in \cite[Th. $2.4$]{SMY}.\\
\noindent 	3. It is pertinent to mention here that there was a mathematical mistake in the proof of  a similar inequality developed in \cite[Th. 2.16]{Rajejla}, the mistake was in the consideration of  $TT^{\sharp_A} $ and $  T^{\sharp_A}T$ as positive operators, which is not necessarily true.
\end{remark}

In our next theorem we obtain upper bounds for the A-numerical radius of operators in $\mathcal{B}_A(\mathcal{H})$ which generalize the inequality in \cite[Th. 2.11]{Zamani}.

\begin{theorem} \label{Theorem:3}
	Let $T \in \mathcal{B}_A(\mathcal{H}).$ Then for all $\alpha \in[0,1]$,
	\begin{eqnarray}\label{3a}
w_A^2(T ) \leq \frac{\alpha}{2} w_A(T^2) + \bigg\|\frac{\alpha}{4}TT^{\sharp_A} + \bigg(1- \frac{3\alpha}{4} \bigg) T^{\sharp_A}T\bigg\|_A
\end{eqnarray}
and
\begin{eqnarray}\label{3b}
w_A^2(T ) \leq \frac{\alpha}{2} w_A(T^2) + \bigg\| \bigg(1- \frac{3\alpha}{4} \bigg)TT^{\sharp_A} + \frac{\alpha}{4}T^{\sharp_A}T\bigg\|_A.
	\end{eqnarray}
\end{theorem}

\begin{proof}
Let $x \in \mathcal{H}$ with $\|x\|_A = 1$.
Considering $a=Tx, b= T^{\sharp_A}x, e=x$ in Lemma \ref{Lemma:3} and then by using AM-GM inequality, we get
\begin{eqnarray*} 
|\langle Tx, x \rangle_A|^2 \leq \frac{1}{2}|\langle T^2x, x \rangle_A| + \frac{1}{4} \langle (TT^{\sharp_A} + T^{\sharp_A}T)x, x \rangle_A.
\end{eqnarray*}
Now,
\begin{eqnarray*}
	|\langle Tx, x \rangle_A| &=& \alpha 	|\langle Tx, x \rangle_A| + (1- \alpha) 	|\langle Tx, x \rangle_A|\\
	\Rightarrow |\langle Tx, x \rangle_A| &\leq&  \alpha 	|\langle Tx, x \rangle_A| + (1-\alpha) \|Tx\|_A ,~~~\mbox{by Cauchy-Schwarz inequality}\\
	\Rightarrow |\langle Tx, x \rangle_A|^2 &\leq & \alpha 	|\langle Tx, x \rangle_A|^2 + (1-\alpha) \|Tx\|_A^2, ~~\mbox{by convexity of}~~ t^2\\
	\Rightarrow |\langle Tx, x \rangle_A|^2 &\leq& \frac{\alpha}{2}|\langle T^2x, x \rangle_A| + \frac{\alpha}{4} \langle (TT^{\sharp_A} + T^{\sharp_A}T)x, x \rangle_A + (1- \alpha) \langle T^{\sharp_A}Tx, x \rangle_A\\
	\Rightarrow |\langle Tx, x \rangle_A|^2 &\leq&  \frac{\alpha}{2}|\langle T^2x, x \rangle_A| + \big\langle \left\{\frac{\alpha}{4}TT^{\sharp_A} +\big(1- \frac{3\alpha}{4} \big)T^{\sharp_A}T \right \}x, x \big\rangle_A\\
	\Rightarrow |\langle Tx, x \rangle_A|^2 &\leq&  \frac{\alpha}{2} w_A(T^2) + \bigg\|\frac{\alpha}{4}TT^{\sharp_A} + \bigg( 1- \frac{3\alpha}{4} \bigg)T^{\sharp_A}T \bigg\|_A.
	\end{eqnarray*}
Taking supremum over $\|x\|_A = 1$, we get  the inequality (\ref{3a}). Similarly as in (\ref{3a}),  we can prove the inequality (\ref{3b}).
\end{proof}

As a consequence of Theorem \ref{Theorem:3}, we get the following corollary.

\begin{cor} \label{Cor3}
	Let $T \in \mathcal{B}_A(\mathcal{H}).$ Then 
	\begin{eqnarray}\label{bound2comb}
	w^{2}(T)\leq \min \left \{\beta_1, \beta_2  \right \}, 
	\end{eqnarray}
	\mbox{where}
	\[ \beta_1 = \min_{0\leq \alpha \leq 1} \left \{\frac{\alpha}{2} w_A(T^2) +\left\| \frac{\alpha}{4}   TT^{\sharp_A}+\left(1-\frac{3\alpha}{4}\right)  T^{\sharp_A}T  \right\|\right \}\] and
	\[\beta_2 = \min_{0\leq \alpha \leq 1} \left \{\frac{\alpha}{2} w_A(T^2) +\left\|  \left(1-\frac{3\alpha}{4}\right)   TT^{\sharp_A} +\frac{\alpha}{4}  T^{\sharp_A} T \right\|\right \}.\]
\end{cor}

\begin{remark}
In particular, if we consider $\alpha=1$ in Corollary \ref{Cor3}, then we get the inequality in \cite[Th. 2.11]{Zamani}, i.e.,
\begin{eqnarray}\label{1b}
 w_A^2(T ) \leq \frac{1}{2} w_A(T^2) + \frac{1}{4} \bigg\|TT^{\sharp_A} +  T^{\sharp_A}T\bigg\|_A.
\end{eqnarray}
Now we consider an example. Let $T$ and $A$ be the same as described in Remark \ref{rem1}. Then by elementary calculations we have, $w_A(T^2)=\frac{1}{\sqrt{3}}$. Also,  
$$\beta_1= \min_{0\leq \alpha\leq 1}\max\left\{  \frac{\alpha}{2}+\frac{\alpha}{2\sqrt{3}},~~ 2- \frac{4\alpha}{3}+\frac{\alpha}{2\sqrt{3}}, ~~\frac{2}{3} \left( 1- \frac{3\alpha}{4}\right)+\frac{\alpha}{2\sqrt{3}}    \right\}= \frac{2}{3}+\frac{1}{2\sqrt{3}}$$ and

$$\beta_2= \min_{0\leq \alpha\leq 1}\max\left\{  2-\frac{3\alpha}{2}+\frac{\alpha}{2\sqrt{3}},~~  \frac{2}{3}+\frac{\alpha}{2\sqrt{3}},~~ \frac{\alpha}{6} +\frac{\alpha}{2\sqrt{3}}    \right\}= \frac{2}{3}+\frac{4}{9\sqrt{3}}.$$
Therefore,
\[\min \left \{\beta_1, \beta_2  \right \}=\frac{2}{3}+\frac{4}{9\sqrt{3}}< \frac{2}{3}+\frac{1}{2\sqrt{3}}= \frac{1}{2} w_A(T^2) + \frac{1}{4} \bigg\|TT^{\sharp_A} +  T^{\sharp_A}T\bigg\|_A. \]
Thus, we conclude that the inequality (\ref{bound2comb}) is a non-trivial improvement of (\ref{1b}).
\end{remark}

Finally, we obtain an inequality that involves A-operator seminorm and A-minimum norm.

\begin{theorem} \label{Prop1}
	Let $T\in \mathcal{B}_A(\mathcal{H})$. Then we have,
	\[\|T\|_A^2+\max \left \{m_A^2(T), m_A^2(T^{\sharp_A})  \right \}\leq \|T^{\sharp_A}T+TT^{\sharp_A}\|_A.\]
\end{theorem}

\begin{proof}
	Let $ x \in \mathcal{H}$ with $  \|x\|_A=1 $. Then by Cauchy-Schwarz inequality, we get
	 \begin{eqnarray*}
	\|Tx\|_A^2+\|T^{\sharp_A}x\|_A^2 &=&\langle (T^{\sharp_A}T+TT^{\sharp_A})x,x\rangle_A \leq \|T^{\sharp_A}T+TT^{\sharp_A}\|_A.
	\end{eqnarray*}
	Therefore,	$\|Tx\|_A^2+ m_A^2(T^{\sharp_A}) \leq \|T^{\sharp_A}T+TT^{\sharp_A}\|_A.$
Taking supremum over $\|x\|_A = 1$, we get,
\begin{eqnarray}\label{4a}
\|T\|_A^2 + m_A^2(T^{\sharp_A}) \leq \|T^{\sharp_A}T+TT^{\sharp_A}\|_A. 
\end{eqnarray}
Similarly, $ m_A^2(T)+\|T^{\sharp_A}x\|_A^2 \leq \|T^{\sharp_A}T+TT^{\sharp_A}\|_A.$
Taking supremum over $\|x\|_A = 1$, we get,
\begin{eqnarray}\label{4b}
\|T\|_A^2 + m_A^2(T) \leq \|T^{\sharp_A}T+TT^{\sharp_A}\|_A. 
\end{eqnarray}
Combining (\ref{4a}) and (\ref{4b}), we get the desired inequality.
\end{proof}

\begin{remark}
The inequality in \cite[Th. 1]{Kias} follows from Theorem \ref{Prop1} and by using the fact (see in \cite{Zamani}) that 
	 $\|T^{\sharp_A}T+TT^{\sharp_A}\|_A \leq 4 w_A^2(T).$
\end{remark}

\section{\textbf{ Inequalities of operator matrices}}

We begin this section with the following known results, the proof of which can be found in \cite{Rajejla, Pintu2, Kias2}.

\begin{lemma} \label{Lemma:7}
	Let $X, Y, Z, W \in \mathcal{B}_A(\mathcal{H}) $. Then the following results hold:

	\begin{eqnarray*}
		&&(i) ~~ w_B \left(\begin{array}{cc}
			X&O \\
			O&Y
		\end{array}\right)=\max \left \{ w_A(X), w_A(Y)\right \}.\\
		&&(ii)~~ w_B \left(\begin{array}{cc}
			O&X \\
			Y&O
		\end{array}\right) =  w_B\left(\begin{array}{cc}
			O&Y \\
			X&O
		\end{array}\right). \\
	&&(iii)~~   
	w_B\left(\begin{array}{cc}
		O&X \\
		e^{i\theta}Y&O
	\end{array}\right) = w_B\left(\begin{array}{cc}
		O&X \\
	  Y&O
	\end{array}\right),~~\textit{For any}~~ \theta \in \mathbb{R}.\\
	&&(iv)~~
	w_B \left(\begin{array}{cc}
		X&Y \\
		Y&X
	\end{array}\right)=\max \left \{ w_A(X+Y), w_A(X-Y)\right \}.\\ 
	&&~~ \textit{In particular}, ~~w_B \left(\begin{array}{cc}
		O&Y \\
		Y&O
	\end{array}\right)=w_A(Y).	\\
&&(v)~~~
\bigg\| \left(\begin{array}{cc}
	X&O \\
	O&Y
\end{array}\right)\bigg\|_B = \bigg\| \left(\begin{array}{cc}
O&X \\
Y&O
\end{array}\right)\bigg\|_B = \max\big\{\|X\|_A, \|Y\|_A\big\}.\\
&&(vi) ~~~
 \left(\begin{array}{cc}
	X&Y \\
	Z&W
\end{array}\right)^{\sharp_B} =  \left(\begin{array}{cc}
X^{\sharp_A}&Z^{\sharp_A} \\
Y^{\sharp_A}&W^{\sharp_A}
\end{array}\right).
	\end{eqnarray*}
	\end{lemma}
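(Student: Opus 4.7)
The strategy is to work on $\mathcal{H}\oplus\mathcal{H}$ equipped with the semi-inner product induced by $B=\mathrm{diag}(A,A)$, under which $\|(x_1,x_2)^T\|_B^2=\|x_1\|_A^2+\|x_2\|_A^2$, and to exploit the invariance principle that whenever $U\in\mathcal{B}_B(\mathcal{H}\oplus\mathcal{H})$ satisfies $U^{\sharp_B}U=UU^{\sharp_B}=I$, the map $x\mapsto Ux$ bijects the $B$-unit sphere with itself, so that $w_B(U^{\sharp_B}TU)=w_B(T)$ and $\|U^{\sharp_B}TU\|_B=\|T\|_B$. I would open with (vii): a block computation gives $B\begin{pmatrix}X^{\sharp_A}&Z^{\sharp_A}\\Y^{\sharp_A}&W^{\sharp_A}\end{pmatrix}=\begin{pmatrix}X^*A&Z^*A\\Y^*A&W^*A\end{pmatrix}=\begin{pmatrix}X&Y\\Z&W\end{pmatrix}^{*}B$, while the range of the candidate adjoint lies in $\overline{R(A)}\oplus\overline{R(A)}=\overline{R(B)}$, so the Douglas uniqueness clause gives the formula. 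Then (i) follows from $\langle\mathrm{diag}(X,Y)(u,v)^T,(u,v)^T\rangle_B=\langle Xu,u\rangle_A+\langle Yv,v\rangle_A$, bounded by $w_A(X)\|u\|_A^2+w_A(Y)\|v\|_A^2\le\max(w_A(X),w_A(Y))$ under $\|u\|_A^2+\|v\|_A^2=1$, with the reverse inequality from specializing $u=0$ or $v=0$. The two identities in (vi) are obtained analogously via $\|\cdot\|_B^2=\|Xu\|_A^2+\|Yv\|_A^2\le\|X\|_A^2\|u\|_A^2+\|Y\|_A^2\|v\|_A^2$ in the diagonal case, and the same computation with the roles of $u,v$ swapped in the off-diagonal case.

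Parts (ii)--(iv) are three applications of the invariance principle. For (ii), the swap $U_1=\begin{pmatrix}O&I\\I&O\end{pmatrix}$ is easily checked to satisfy $U_1^{\sharp_B}=U_1$ and $U_1^2=I$; it conjugates $\begin{pmatrix}O&X\\Y&O\end{pmatrix}$ into $\begin{pmatrix}O&Y\\X&O\end{pmatrix}$. For (iii), the phase unitary $U_2=\mathrm{diag}(e^{-i\theta/2}I,e^{i\theta/2}I)$ conjugates $\begin{pmatrix}O&X\\Y&O\end{pmatrix}$ to $\begin{pmatrix}O&e^{i\theta}X\\e^{-i\theta}Y&O\end{pmatrix}$; multiplying by the unimodular scalar $e^{-i\theta}$ (which preserves $w_B$) and relabelling $\phi=-2\theta$ delivers the claim for arbitrary $\phi\in\mathbb{R}$. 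For (iv), the Hadamard-type operator $U_3=\tfrac{1}{\sqrt{2}}\begin{pmatrix}I&I\\I&-I\end{pmatrix}$ satisfies $U_3^{\sharp_B}=U_3^*=U_3$ and $U_3^2=I$; a straightforward block multiplication gives $U_3^{\sharp_B}\begin{pmatrix}X&Y\\Y&X\end{pmatrix}U_3=\mathrm{diag}(X+Y,X-Y)$, so (iv) reduces to (i), and setting $X=O$ recovers the stated special case.

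The delicate piece is (v). My plan is to apply the polar-type identity $w_A(T)=\tfrac{1}{2}\sup_{\theta\in\mathbb{R}}\|e^{i\theta}T+e^{-i\theta}T^{\sharp_A}\|_A$, valid for every $T\in\mathcal{B}_A(\mathcal{H})$ (see \cite{Zamani}), to the off-diagonal matrix $T=\begin{pmatrix}O&X\\Y&O\end{pmatrix}$ viewed inside $\mathcal{B}_B(\mathcal{H}\oplus\mathcal{H})$. Combining (vii) with (vi), the inner norm unfolds as
\[
\bigl\|e^{i\theta}T+e^{-i\theta}T^{\sharp_B}\bigr\|_B=\max\bigl(\|e^{i\theta}X+e^{-i\theta}Y^{\sharp_A}\|_A,\,\|e^{i\theta}Y+e^{-i\theta}X^{\sharp_A}\|_A\bigr).
\]
The main obstacle is that the stated identity retains only the first factor, so I must show the two suprema (over $\theta$) coincide. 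This I would handle by the chain $\|e^{i\theta}Y+e^{-i\theta}X^{\sharp_A}\|_A=\|(e^{i\theta}Y+e^{-i\theta}X^{\sharp_A})^{\sharp_A}\|_A=\|e^{-i\theta}Y^{\sharp_A}+e^{i\theta}(X^{\sharp_A})^{\sharp_A}\|_A$, together with the observation that $(X^{\sharp_A})^{\sharp_A}=PXP$ and $PXP$ is indistinguishable from $X$ to the $A$-seminorm (because $AP=PA=A$ yields $\|Sx\|_A=\|PSx\|_A=\|PSPx\|_A$ for every $S,x$); a final change of variable $\theta\mapsto-\theta$ aligns the two suprema and closes the argument.
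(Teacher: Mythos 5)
The paper does not actually prove this lemma: it is stated as a collection of ``well known equalities'' with the proofs deferred to the references \cite{Rajejla, Pintu2, Kias2}. Your write-up is therefore a self-contained reconstruction, and it follows what is essentially the standard route in those sources: (vii) from the uniqueness clause of the Douglas/reduced-solution characterization, (i) and (vi) by direct computation on $B$-unit vectors, (ii)--(iv) by conjugation with norm-preserving block operators, and (v) by feeding $T=\left(\begin{smallmatrix}O&X\\Y&O\end{smallmatrix}\right)$ into Zamani's identity $w_B(T)=\sup_\theta\|\tfrac12(e^{i\theta}T+e^{-i\theta}T^{\sharp_B})\|_B$ and then using (vi) and (vii). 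The block computations in (ii)--(v) are all correct, and your observation that the two entries of the $\max$ in (v) coincide after applying $\|S\|_A=\|S^{\sharp_A}\|_A$ is exactly the point that makes that part work.

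Two technical repairs are needed. First, your ``invariance principle'' is predicated on $U^{\sharp_B}U=UU^{\sharp_B}=I$, which is generally \emph{false} when $A$ is not injective: by (vii), $I^{\sharp_A}=A^{\dagger}A=P$, so for the swap $U_1$ one gets $U_1^{\sharp_B}U_1=\mathrm{diag}(P,P)\neq I$; indeed $U^{\sharp_B}U=I$ would force $\overline{R(B)}=\mathcal{H}\oplus\mathcal{H}$. The correct certificate for your $U_1,U_2,U_3$ is that each is a bijection with $\|Ux\|_B=\|x\|_B$ (for $U_3$ this is the parallelogram law for $\|\cdot\|_A$) and $U^{*}B=BU^{-1}$, which gives $\langle TUx,Ux\rangle_B=\langle U^{-1}TUx,x\rangle_B$ and hence $w_B(U^{-1}TU)=w_B(T)$; this is all you use, and the conclusions stand. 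Second, in (v) the claim $\|Sx\|_A=\|PSPx\|_A$ ``for every $S,x$'' is too strong: removing the trailing $P$ requires $\|S(I-P)x\|_A\le\|S\|_A\|(I-P)x\|_A=0$, which needs $S$ to be $A$-bounded. In your application $S=e^{-i\theta}Y^{\sharp_A}+e^{i\theta}X\in\mathcal{B}_A(\mathcal{H})$, so the step is valid, but the hypothesis should be stated. With these two adjustments the proof is complete and correct.
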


First we obtain an upper bound for the $B$-operator seminorm of $2\times 2$ operator matrices of the form $\left(\begin{array}{cc}
X&Y \\
Z&W
\end{array}\right)$, where $X, Y,Z,W \in \mathcal{B}_A(\mathcal{H})$.

\begin{theorem} \label{Theorem:13}
	Let $X, Y, Z, W \in \mathcal{B}_A(\mathcal{H}).$ Then
	\begin{eqnarray*}
	 \bigg\| \left(\begin{array}{cc}
	X&Y \\
	Z&W
	\end{array}\right)\bigg\|^2_B &\leq& \max\{\|X\|_A^2, \|W\|_A^2\}+ \max\{\|X\|_A, \|W\|_A\}\max\{\|Y\|_A, \|Z\|_A \} \\
	&&+ \max\{\|Y\|_A^2, \|Z\|_A^2\}+ w_B\left(\begin{array}{cc}
	O&Z^{\sharp_A}W \\
	Y^{\sharp_A}X&O
	\end{array}\right).
\end{eqnarray*} 
\end{theorem}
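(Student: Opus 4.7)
The approach is to pass from $\|T\|_B^2$, with $T = \left(\begin{array}{cc} X & Y \\ Z & W \end{array}\right)$, to the $B$-numerical radius of $T^{\sharp_B}T$ via the identity $\|T\|_B^2 = \|T^{\sharp_B}T\|_B$ recalled in the introduction. Since $T^{\sharp_B}T$ is $B$-positive, and hence $B$-selfadjoint, the standard semi-Hilbertian analogue of the norm-equals-numerical-radius fact lets us rewrite $\|T^{\sharp_B}T\|_B = w_B(T^{\sharp_B}T)$, after which the subadditivity of $w_B$ becomes the main engine.

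I would write $T = D + N$ with $D = \left(\begin{array}{cc} X & O \\ O & W \end{array}\right)$ and $N = \left(\begin{array}{cc} O & Y \\ Z & O \end{array}\right)$, so that by Lemma \ref{Lemma:7}(vii) a direct computation gives
\[
T^{\sharp_B}T = \left(\begin{array}{cc} X^{\sharp_A}X + Z^{\sharp_A}Z & O \\ O & Y^{\sharp_A}Y + W^{\sharp_A}W \end{array}\right) + \left(\begin{array}{cc} O & X^{\sharp_A}Y \\ W^{\sharp_A}Z & O \end{array}\right) + \left(\begin{array}{cc} O & Z^{\sharp_A}W \\ Y^{\sharp_A}X & O \end{array}\right),
\]
where the three summands are $D^{\sharp_B}D + N^{\sharp_B}N$, $D^{\sharp_B}N$, and $N^{\sharp_B}D$, respectively. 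Subadditivity of $w_B$ then reduces the task to bounding each of these three pieces individually.

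For the block-diagonal piece, Lemma \ref{Lemma:7}(i) yields $\max\{w_A(X^{\sharp_A}X + Z^{\sharp_A}Z),\, w_A(Y^{\sharp_A}Y + W^{\sharp_A}W)\}$. Each inner summand is $A$-positive, so
\[
w_A(X^{\sharp_A}X + Z^{\sharp_A}Z) = \sup_{\|x\|_A=1}\bigl(\|Xx\|_A^2 + \|Zx\|_A^2\bigr) \leq \|X\|_A^2 + \|Z\|_A^2,
\]
and combining with the analogous bound for the other block via $\max\{a+b,c+d\} \leq \max\{a,c\} + \max\{b,d\}$ produces the first and third summands of the claimed bound. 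For the middle off-diagonal piece, using $w_B \leq \|\cdot\|_B$ and Lemma \ref{Lemma:7}(vi) gives $\max\{\|X^{\sharp_A}Y\|_A, \|W^{\sharp_A}Z\|_A\}$, and submultiplicativity of $\|\cdot\|_A$ together with $\|X^{\sharp_A}\|_A = \|X\|_A$ collapses this to $\max\{\|X\|_A, \|W\|_A\}\max\{\|Y\|_A, \|Z\|_A\}$. The third piece is left inside $w_B$ and contributes precisely the $w_B$-term on the right-hand side.

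The main obstacle is preserving the correct asymmetry between the two off-diagonal summands $D^{\sharp_B}N$ and $N^{\sharp_B}D$: one must estimate the first by its $B$-operator seminorm (to extract the product-of-maxima term) while leaving the second inside $w_B$ (to retain the sharper final term). Collapsing them into a single matrix $\left(\begin{array}{cc} O & X^{\sharp_A}Y + Z^{\sharp_A}W \\ Y^{\sharp_A}X + W^{\sharp_A}Z & O \end{array}\right)$ before applying subadditivity would destroy this structure. A secondary delicate point worth flagging is the identity $\|T^{\sharp_B}T\|_B = w_B(T^{\sharp_B}T)$ for the $B$-selfadjoint operator $T^{\sharp_B}T$, which is the semi-Hilbertian analogue of the classical norm-equals-numerical-radius equality.
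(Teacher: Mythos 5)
Your argument is correct, but it takes a genuinely different route from the paper's. The paper never forms $T^{\sharp_B}T$: it estimates $|\langle Tx,y\rangle_B|^2$ directly for two $B$-unit vectors $x,y$, expands the square of $\langle Dx,y\rangle_B+\langle Nx,y\rangle_B$ (with $D$ the diagonal and $N$ the off-diagonal part of $T$), and applies the Buzano-type inequality of Lemma \ref{Lemma:5} with $a=Dx$, $b=Nx$, $e=y$ to the cross term; that single application is what splits the cross term into $\|Dx\|_B\|Nx\|_B+|\langle N^{\sharp_B}Dx,x\rangle_B|$, yielding the product-of-maxima term and the residual $w_B$-term in one stroke. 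Your route instead converts $\|T\|_B^2$ into $w_B(T^{\sharp_B}T)$ and lets subadditivity of $w_B$ do the splitting, with the asymmetric treatment of $D^{\sharp_B}N$ (estimated by its seminorm) versus $N^{\sharp_B}D$ (kept inside $w_B$) playing the role that Buzano's inequality plays in the paper. The cost is the identity $\|S\|_B=w_B(S)$ for the $B$-positive operator $S=T^{\sharp_B}T$, a standard semi-Hilbertian fact (it follows from the generalized Cauchy--Schwarz inequality applied to the positive semidefinite form $(u,v)\mapsto\langle BSu,v\rangle$) but one the paper neither states nor needs; you rightly flag it, and in a written-up version you should prove or cite it. The gain is that your intermediate bounds are at least as sharp as the stated ones before the final relaxation: you retain $\max\{\|X\|_A^2+\|Z\|_A^2,\ \|Y\|_A^2+\|W\|_A^2\}$ and $\max\{\|X^{\sharp_A}Y\|_A,\ \|W^{\sharp_A}Z\|_A\}$, each dominated by the corresponding sum and product of maxima in the theorem. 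The remaining steps you list (the block computation of $T^{\sharp_B}T$ via Lemma \ref{Lemma:7}(vii), the identity $\langle X^{\sharp_A}Xx,x\rangle_A=\|Xx\|_A^2$, the elementary inequality $\max\{a+b,c+d\}\le\max\{a,c\}+\max\{b,d\}$, and $\|X^{\sharp_A}\|_A=\|X\|_A$) all check out.
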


\begin{proof}
Let $x, y$ be two B-unit vectors in $ \mathcal{H} \oplus \mathcal{H}.$  Then, we get 
\begin{eqnarray*}
 && \bigg|\bigg\langle \left(\begin{array}{cc}
		X&Y \\
		Z&W
	\end{array}\right)x, y \bigg\rangle_B\bigg|^2 \\
	&=& \bigg|\bigg\langle \bigg[\left(\begin{array}{cc}
	X&O \\
	O&W
\end{array}\right) + \left(\begin{array}{cc}
O&Y \\
Z&O
\end{array}\right)\bigg] x, y \bigg\rangle_B\bigg|^2\\
&=& \bigg|\bigg\langle \left(\begin{array}{cc}
	X&O \\
	O&W
\end{array}\right)x, y \bigg\rangle_B + \bigg\langle \left(\begin{array}{cc}
O&Y \\
Z&O
\end{array}\right)x, y \bigg\rangle_B  \bigg|^2 \\
&\leq& \bigg|\bigg\langle \left(\begin{array}{cc}
	X&O \\
	O&W
\end{array}\right)x, y \bigg\rangle_B\bigg|^2 +  \bigg|\bigg\langle \left(\begin{array}{cc}
O&Y \\
Z&O
\end{array}\right)x, y \bigg\rangle_B\bigg|^2 \\
&&+ 2  \bigg|\bigg\langle \left(\begin{array}{cc}
X&O \\
O&W
\end{array}\right)x, y \bigg\rangle_B\bigg| \bigg|\bigg\langle \left(\begin{array}{cc}
O&Y \\
Z&O
\end{array}\right)x, y \bigg\rangle_B\bigg|\\
&=& \bigg|\bigg\langle \left(\begin{array}{cc}
	X&O \\
	O&W
\end{array}\right)x, y \bigg\rangle_B\bigg|^2 +  \bigg|\bigg\langle \left(\begin{array}{cc}
	O&Y \\
	Z&O
\end{array}\right)x, y \bigg\rangle_B\bigg|^2 \\
&&+ 2  \bigg|\bigg\langle \left(\begin{array}{cc}
	X&O \\
	O&W
\end{array}\right)x, y \bigg\rangle_B \bigg\langle y, \left(\begin{array}{cc}
O&Y \\
Z&O
\end{array}\right)x \bigg\rangle_B\bigg| \\
&\leq&  \bigg|\bigg\langle \left(\begin{array}{cc}
	X&O \\
	O&W
\end{array}\right)x, y \bigg\rangle_B\bigg|^2 +  \bigg|\bigg\langle \left(\begin{array}{cc}
	O&Y \\
	Z&O
\end{array}\right)x, y \bigg\rangle_B\bigg|^2 \\
&&+ \bigg\|\left(\begin{array}{cc}
	X&O \\
	O&W
\end{array}\right)x\bigg\|_B \bigg\|\left(\begin{array}{cc}
O&Y \\
Z&O
\end{array}\right)x\bigg\|_B  + \bigg| \bigg\langle \left(\begin{array}{cc}
X&O \\
O&W
\end{array}\right)x, \left(\begin{array}{cc}
O&Y \\
Z&O
\end{array}\right )x \bigg\rangle_B\bigg|,\\
&& ~~\,\,\,\,\,\,\,\,\,\,\,\,\,\,\,\,\,\,\,\,\,\,\,\,\,\,\,\,\,\,\,\,\,\,\,\,\,\,\,\,\,\,\,\,\,\,\,\,\,\,\,\,\,\,\,\,\,\,\,\,\,\,\,\,\,\,\,\,\,\,\,\,\,\,\,\,\,\,\,\,\,\,\,\,\,\,\,\,\,\,\,\,\,\,\,\,\,\,\,\,\,\,\,\,\,\,\,\,\,\,\,\,\,\,\,\,\,\,\,\,\,\,\,\,\,\,\mbox{by Lemma \ref{Lemma:3}}\\
&=&  \bigg|\bigg\langle \left(\begin{array}{cc}
	X&O \\
	O&W
\end{array}\right)x, y \bigg\rangle_B\bigg|^2 +  \bigg|\bigg\langle \left(\begin{array}{cc}
	O&Y \\
	Z&O
\end{array}\right)x, y \bigg\rangle_B\bigg|^2 \\
&&+ \bigg\|\left(\begin{array}{cc}
	X&O \\
	O&W
\end{array}\right)x\bigg\|_B \bigg\|\left(\begin{array}{cc}
	O&Y \\
	Z&O
\end{array}\right)x\bigg\|_B  + \bigg| \bigg\langle \left(\begin{array}{cc}
	O&Z^{\sharp_A}W \\
	Y^{\sharp_A}X&O
\end{array}\right)x, x \bigg\rangle_B\bigg|.
\end{eqnarray*}
Taking supremum over $x, y$ with $\|x\|_B = \|y\|_B = 1$ and using Lemma \ref{Lemma:7}, we get the required inequality.
\end{proof}

\begin{remark}
We would like to note that the inequality in  \cite[Th. 2.1]{Kittaneh:3} follows from Theorem \ref{Theorem:13} by considering $A = I.$
\end{remark}

Next we obtain an upper bound for the $B$-numerical radius of $\left(\begin{array}{cc}
X&Y \\
Z&W
\end{array}\right)$, where $X, Y,Z,W \in \mathcal{B}_A(\mathcal{H})$.

\begin{theorem} \label{Theorem:14}
	Let $X, Y, Z, W \in \mathcal{B}_A(\mathcal{H}).$ Then
	\begin{eqnarray*}
		w_B^2\left(\begin{array}{cc}
			X&Y \\
			Z&W
		\end{array}\right) &\leq& \max\{w_A^2(X), w_A^2(W)\}+ w_B^2\left(\begin{array}{cc}
		O&Y \\
		Z&O
	\end{array}\right) +  w_B\left(\begin{array}{cc}
	O&Z^{\sharp_A}W \\
	Y^{\sharp_A}X&O
\end{array}\right) \\
&&+ \frac{1}{2} \max \big\{ \|X^{\sharp_A}X + Z^{\sharp_A}Z\|_A, \|W^{\sharp_A}W + Y^{\sharp_A}Y\|_A\big\}.
\end{eqnarray*} 
\end{theorem}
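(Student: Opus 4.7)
The strategy mirrors that of Theorem~\ref{Theorem:13}, except that now the same $B$-unit vector $x$ is paired on both sides of the inner product, since the target is a $B$-numerical radius bound rather than a $B$-operator seminorm bound. Split the $2\times 2$ block matrix as
$$\begin{pmatrix} X & Y \\ Z & W \end{pmatrix} = D + N, \qquad D := \begin{pmatrix} X & O \\ O & W \end{pmatrix}, \qquad N := \begin{pmatrix} O & Y \\ Z & O \end{pmatrix},$$
and, for any $x \in \mathcal{H} \oplus \mathcal{H}$ with $\|x\|_B = 1$, apply the triangle inequality to obtain
$$|\langle (D+N)x, x\rangle_B|^2 \leq |\langle Dx, x\rangle_B|^2 + |\langle Nx, x\rangle_B|^2 + 2|\langle Dx, x\rangle_B|\,|\langle Nx, x\rangle_B|.$$
The first two diagonal pieces will supply $\max\{w_A^2(X), w_A^2(W)\}$ via Lemma~\ref{Lemma:7}(i) and $w_B^2\!\left(\begin{smallmatrix} O & Y \\ Z & O \end{smallmatrix}\right)$ after the final supremum.

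The crucial move is on the cross term. I would apply Lemma~\ref{Lemma:5} with $a = Nx$, $b = Dx$, and $e = x$ to get
$$2|\langle Dx, x\rangle_B|\,|\langle Nx, x\rangle_B| \leq |\langle Nx, Dx\rangle_B| + \|Nx\|_B \|Dx\|_B.$$
For the first summand, the $B$-adjoint identity gives $\langle Nx, Dx\rangle_B = \overline{\langle N^{\sharp_B}Dx, x\rangle_B}$, hence this term is at most $w_B(N^{\sharp_B}D)$. For the second summand, AM-GM yields $\|Nx\|_B\|Dx\|_B \leq \tfrac12(\|Nx\|_B^2 + \|Dx\|_B^2) = \tfrac12\langle (D^{\sharp_B}D + N^{\sharp_B}N)x, x\rangle_B$, and since $D^{\sharp_B}D + N^{\sharp_B}N$ is $B$-positive, Cauchy-Schwarz in the $B$-seminorm bounds the latter by $\tfrac12\|D^{\sharp_B}D + N^{\sharp_B}N\|_B$.

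What remains is a routine block computation. By Lemma~\ref{Lemma:7}(vii) together with the obvious block multiplication,
$$N^{\sharp_B}D = \begin{pmatrix} O & Z^{\sharp_A}W \\ Y^{\sharp_A}X & O \end{pmatrix}, \qquad D^{\sharp_B}D + N^{\sharp_B}N = \begin{pmatrix} X^{\sharp_A}X + Z^{\sharp_A}Z & O \\ O & W^{\sharp_A}W + Y^{\sharp_A}Y \end{pmatrix},$$
and Lemma~\ref{Lemma:7}(vi) then identifies the $B$-seminorm of the diagonal block as $\max\{\|X^{\sharp_A}X + Z^{\sharp_A}Z\|_A, \|W^{\sharp_A}W + Y^{\sharp_A}Y\|_A\}$. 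Collecting all four contributions and taking the supremum over $B$-unit vectors $x$ produces the stated inequality.

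I expect the cross-term reduction to be the only real obstacle. The naive bound $2|\alpha||\beta| \leq |\alpha|^2 + |\beta|^2$ would just double the diagonal and off-diagonal numerical radii and discard the finer structure; Lemma~\ref{Lemma:5} is precisely what produces the sharper pair of terms $w_B(N^{\sharp_B}D)$ and $\tfrac12\|D^{\sharp_B}D + N^{\sharp_B}N\|_B$ appearing in the statement. Once this substitution is in place, the rest reduces to recognising $N^{\sharp_B}D$ and $D^{\sharp_B}D + N^{\sharp_B}N$ as the specific block matrices above via the adjoint rule of Lemma~\ref{Lemma:7}(vii).
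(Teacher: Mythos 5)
Your proposal is correct and follows essentially the same route as the paper's proof: the same splitting into diagonal plus anti-diagonal blocks, the same expansion of the squared modulus, Lemma \ref{Lemma:5} applied to the cross term with $e=x$, AM--GM on $\|Dx\|_B\|Nx\|_B$, and the identification of $N^{\sharp_B}D$ and $D^{\sharp_B}D+N^{\sharp_B}N$ via Lemma \ref{Lemma:7}. No gaps.
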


\begin{proof}
Let $x$ be a B-unit vector in $\mathcal{H} \oplus \mathcal{H}$. Then, we get
\begin{eqnarray*}
&& \bigg|\bigg\langle \left(\begin{array}{cc}
	X&Y \\
	Z&W
\end{array}\right)x, x \bigg\rangle_B\bigg|^2 \\
&=& \bigg|\bigg\langle \bigg[\left(\begin{array}{cc}
	X&O \\
	O&W
\end{array}\right) + \left(\begin{array}{cc}
	O&Y \\
	Z&O
\end{array}\right)\bigg] x, x \bigg\rangle_B\bigg|^2\\
&=& \bigg|\bigg\langle \left(\begin{array}{cc}
	X&O \\
	O&W
\end{array}\right)x, x \bigg\rangle_B + \bigg\langle \left(\begin{array}{cc}
	O&Y \\
	Z&O
\end{array}\right)x, x \bigg\rangle_B  \bigg|^2 \\
&\leq& \bigg|\bigg\langle \left(\begin{array}{cc}
	X&O \\
	O&W
\end{array}\right)x, x \bigg\rangle_B\bigg|^2 +  \bigg|\bigg\langle \left(\begin{array}{cc}
	O&Y \\
	Z&O
\end{array}\right)x, x \bigg\rangle_B\bigg|^2 \\
&&+ 2  \bigg|\bigg\langle \left(\begin{array}{cc}
	X&O \\
	O&W
\end{array}\right)x, x \bigg\rangle_B\bigg\langle \left(\begin{array}{cc}
	O&Y \\
	Z&O
\end{array}\right)x, x \bigg\rangle_B\bigg|\\
&=& \bigg|\bigg\langle \left(\begin{array}{cc}
	X&O \\
	O&W
\end{array}\right)x, x \bigg\rangle_B\bigg|^2 +  \bigg|\bigg\langle \left(\begin{array}{cc}
	O&Y \\
	Z&O
\end{array}\right)x, x \bigg\rangle_B\bigg|^2 \\
&& +  2  \bigg|\bigg\langle \left(\begin{array}{cc}
	X&O \\
	O&W
\end{array}\right)x, x \bigg\rangle_B\bigg\langle x, \left(\begin{array}{cc}
O&Y \\
Z&O
\end{array}\right)x \bigg\rangle_B\bigg|\\
&\leq&  \bigg|\bigg\langle \left(\begin{array}{cc}
	X&O \\
	O&W
\end{array}\right)x, x \bigg\rangle_B\bigg|^2 +  \bigg|\bigg\langle \left(\begin{array}{cc}
	O&Y \\
	Z&O
\end{array}\right)x, x \bigg\rangle_B\bigg|^2 \\
&& + \bigg\|\left(\begin{array}{cc}
	X&O \\
	O&W
\end{array}\right)x\bigg\|_B \bigg\|\left(\begin{array}{cc}
O&Y \\
Z&O
\end{array}\right)x \bigg\|_B + \bigg|\bigg\langle \left(\begin{array}{cc}
X&O \\
O&W
\end{array}\right)x,\left(\begin{array}{cc}
O&Y \\
Z&O
\end{array}\right)x  \bigg\rangle_B\bigg|,\\
&& ~~\,\,\,\,\,\,\,\,\,\,\,\,\,\,\,\,\,\,\,\,\,\,\,\,\,\,\,\,\,\,\,\,\,\,\,\,\,\,\,\,\,\,\,\,\,\,\,\,\,\,\,\,\,\,\,\,\,\,\,\,\,\,\,\,\,\,\,\,\,\,\,\,\,\,\,\,\,\,\,\,\,\,\,\,\,\,\,\,\,\,\,\,\,\,\,\,\,\,\,\,\,\,\,\,\,\,\,\,\,\,\,\,\,\,\,\,\,\,\,\,\,\,\,\,\,\,\mbox{by Lemma \ref{Lemma:3}}\\
&=&   \bigg|\bigg\langle \left(\begin{array}{cc}
	X&O \\
	O&W
\end{array}\right)x, x \bigg\rangle_B\bigg|^2 +  \bigg|\bigg\langle \left(\begin{array}{cc}
	O&Y \\
	Z&O
\end{array}\right)x, x \bigg\rangle_B\bigg|^2 \\
&& +  \bigg\langle \left(\begin{array}{cc}
	X^{\sharp_A}X&O \\
	O&W^{\sharp_A}W
\end{array}\right)x, x  \bigg\rangle^{\frac{1}{2}}_B  \bigg\langle \left(\begin{array}{cc}
Z^{\sharp_A}Z&O \\
O&Y^{\sharp_A}Y
\end{array}\right)x, x  \bigg\rangle^{\frac{1}{2}}_B \\
&& +\bigg| \bigg\langle \left(\begin{array}{cc}
O&	Z^{\sharp_A}W \\
Y^{\sharp_A}X & O
\end{array}\right)x, x  \bigg \rangle_B\bigg|\\
&\leq&   \bigg|\bigg\langle \left(\begin{array}{cc}
	X&O \\
	O&W
\end{array}\right)x, x \bigg\rangle_B\bigg|^2 +  \bigg|\bigg\langle \left(\begin{array}{cc}
	O&Y \\
	Z&O
\end{array}\right)x, x \bigg\rangle_B\bigg|^2 \\
&&  +  \frac{1}{2}\bigg\langle \left(\begin{array}{cc}
	X^{\sharp_A}X + 	Z^{\sharp_A}Z&O \\
	O& Y^{\sharp_A}Y + W^{\sharp_A}W
\end{array}\right)x, x  \bigg\rangle_B \\
 && + \bigg|\bigg\langle \left(\begin{array}{cc}
	O&	Z^{\sharp_A}W \\
	Y^{\sharp_A}X & O
\end{array}\right)x, x  \bigg \rangle_B\bigg|,~~~ ~~~~\mbox{by AM-GM inequality}.
\end{eqnarray*}
Taking supremum over $\|x\|_B = 1$, and using Lemma \ref{Lemma:7}, we get the required inequality of the theorem.
\end{proof}

In particular,  considering $W=X,$ $ Z=Y$ in Theorem \ref{Theorem:14} and then using Lemma \ref{Lemma:7}, we get the following corollary.

\begin{cor}
Let $X, Y \in \mathcal{B}_A(\mathcal{H}).$ Then
	\[ \max \{w^2_A(X+Y), w^2_A(X-Y)\} \leq w_A^2(X) + w_A^2(Y) + \frac{1}{2}\|	X^{\sharp_A}X + 	Y^{\sharp_A}Y\|_A + w_A(Y^{\sharp_A}X).\]
\end{cor}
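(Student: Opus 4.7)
The plan is to specialize Theorem~\ref{Theorem:14} by setting $W=X$ and $Z=Y$, and then reduce every block-matrix quantity that appears to a scalar expression in $X$ and $Y$ via Lemma~\ref{Lemma:7}. This is precisely the route indicated by the sentence preceding the corollary, and no new inequality needs to be proved beyond the identifications in Lemma~\ref{Lemma:7}(iv).

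More concretely, I would first invoke Theorem~\ref{Theorem:14} with those substitutions, producing an inequality whose left-hand side is $w_B^2\left(\begin{array}{cc} X & Y \\ Y & X \end{array}\right)$ and whose right-hand side collects four terms. I would then simplify each piece in turn. The first term $\max\{w_A^2(X), w_A^2(W)\}$ collapses trivially to $w_A^2(X)$, and the fourth term (the symmetric maximum of $A$-operator seminorms) collapses to $\tfrac12\|X^{\sharp_A}X + Y^{\sharp_A}Y\|_A$. For the off-diagonal block $\left(\begin{array}{cc} O & Y \\ Y & O \end{array}\right)$, the ``in particular'' clause of Lemma~\ref{Lemma:7}(iv) identifies its $B$-numerical radius with $w_A(Y)$, so its square is $w_A^2(Y)$; the same clause applied to $\left(\begin{array}{cc} O & Y^{\sharp_A}X \\ Y^{\sharp_A}X & O \end{array}\right)$ yields $w_A(Y^{\sharp_A}X)$. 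Finally, Lemma~\ref{Lemma:7}(iv) applied to the left-hand side gives $\max\{w_A^2(X+Y), w_A^2(X-Y)\}$, and assembling these pieces produces the stated bound.

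No step poses a real obstacle here; the whole argument is a bookkeeping exercise in matching the general two-by-two inequality of Theorem~\ref{Theorem:14} to the symmetric special case $W=X$, $Z=Y$. The one point requiring care is to distinguish the unsquared contribution coming from the third summand of Theorem~\ref{Theorem:14} (which, after substitution, is $w_A(Y^{\sharp_A}X)$) from the squared contributions arising from the first and second summands; as written, matching with the stated corollary requires reading the last term on the right-hand side as this $B$-numerical radius of the off-diagonal block rather than performing any additional estimate.
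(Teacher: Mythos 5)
Your proposal is correct and follows exactly the paper's (implicit) route: substitute $W=X$, $Z=Y$ into Theorem~\ref{Theorem:14} and collapse each block term via Lemma~\ref{Lemma:7}(iv). You are also right to flag the final summand: the substitution yields the unsquared $w_A(Y^{\sharp_A}X)$, so the exponent in the printed corollary appears to be a typo (compare the analogous corollary after Theorem~\ref{Theorem:15}, where the corresponding term $w_A(YX)$ is correctly left unsquared).
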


\begin{remark}
Consider  $T = \left(\begin{array}{cc}
	X&Y \\
	Z&W
	\end{array}\right),$ where  $Y= Z = \left(\begin{array}{cc}
	0&1\\
	0&0
	\end{array}\right),$ $ X = W =\left(\begin{array}{cc}
	0&0\\
	0&0
	\end{array}\right).$ If we take $  A = I$ then the bound obtained by Rout et. al. in \cite[Th. 3.5]{Rout} gives $w^2_B(T) \leq 4$, whereas the bound obtained in Theorem \ref{Theorem:14} gives $w^2_B(T) \leq \frac{3}{4}.$ Therefore, for this operator matrix T, the bound obtained in Theorem \ref{Theorem:14} is better than that in \cite[Th. 3.5]{Rout}.
\end{remark}

Finnaly, we obtain another upper bound for the $B$-numerical radius of  $2\times 2$ operator matrices of the form $\left(\begin{array}{cc}
X&Y \\
Z&W
\end{array}\right)$, where $X, Y,Z,W \in \mathcal{B}_A(\mathcal{H})$.

\begin{theorem} \label{Theorem:15}
	Let $X, Y, Z, W \in \mathcal{B}_A(\mathcal{H})$. Then
\begin{eqnarray*}
	w_B^2\left(\begin{array}{cc}
		X&Y \\
		Z&W
	\end{array}\right) &\leq & \max\{w_A^2(X), w_A^2(W)\}+ \frac{1}{2} \max\{w_A(YZ), w_A(ZY)\}\\
	&& +w_B  \left(\begin{array}{cc}
	O&YW \\
	ZX&O
\end{array}\right)  \\
	&& + \frac{1}{4}\max \{\|YY^{\sharp_A} + Z^{\sharp_A}Z\|_A, \| Y^{\sharp_A}Y + ZZ^{\sharp_A} \|_A \} \\
&& + \frac{1}{2} \max\{\| X^{\sharp_A}X + YY^{\sharp_A}\|_A, \| W^{\sharp_A}W+ZZ^{\sharp_A}\|_A\}.
\end{eqnarray*} 	
\end{theorem}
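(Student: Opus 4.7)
The plan is to decompose $T$ exactly as in the proof of Theorem~\ref{Theorem:14} and treat the three resulting pieces separately, with two changes: (a) the bound for the off-diagonal term $|\langle Nx,x\rangle_B|^{2}$ is obtained via the Theorem~\ref{Theorem:4} estimate (the $r=1$ case) rather than just by $w_B^{2}(N)$, and (b) the Buzano pairing in the cross term is re-chosen so as to produce the product $ND$ in place of $N^{\sharp_B}D$. Concretely, write $T=D+N$ with $D=\left(\begin{array}{cc}X&O\\O&W\end{array}\right)$ and $N=\left(\begin{array}{cc}O&Y\\Z&O\end{array}\right)$. For every $B$-unit vector $x\in\mathcal{H}\oplus\mathcal{H}$, the triangle inequality squared yields
\[
|\langle Tx,x\rangle_B|^{2}\le|\langle Dx,x\rangle_B|^{2}+|\langle Nx,x\rangle_B|^{2}+2\,|\langle Dx,x\rangle_B|\,|\langle Nx,x\rangle_B|.
\]

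The first summand gives $w_B^{2}(D)=\max\{w_A^{2}(X),w_A^{2}(W)\}$ after supremum, by Lemma~\ref{Lemma:7}(i). For the second, I mimic the proof of Theorem~\ref{Theorem:4}: applying Lemma~\ref{Lemma:5} with $a=Nx$, $b=N^{\sharp_B}x$, $e=x$, using the adjoint identity $\langle Nx,N^{\sharp_B}x\rangle_B=\langle N^{2}x,x\rangle_B$ (immediate from $BN^{\sharp_B}=N^{*}B$), and following with AM-GM on $\|Nx\|_B\|N^{\sharp_B}x\|_B$, one obtains
\[
|\langle Nx,x\rangle_B|^{2}\le\frac{1}{2}|\langle N^{2}x,x\rangle_B|+\frac{1}{4}\langle(NN^{\sharp_B}+N^{\sharp_B}N)x,x\rangle_B.
\]
Block computation via Lemma~\ref{Lemma:7}(vii) gives $N^{2}=\mbox{diag}(YZ,\,ZY)$ and $NN^{\sharp_B}+N^{\sharp_B}N=\mbox{diag}(YY^{\sharp_A}+Z^{\sharp_A}Z,\,Y^{\sharp_A}Y+ZZ^{\sharp_A})$; passing to the supremum via Lemma~\ref{Lemma:7}(i) produces the $\frac{1}{2}\max\{w_A(YZ),w_A(ZY)\}$ and $\frac{1}{4}\max\{\|YY^{\sharp_A}+Z^{\sharp_A}Z\|_A,\|Y^{\sharp_A}Y+ZZ^{\sharp_A}\|_A\}$ terms in the statement.

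The main new idea lies in the cross term. Rather than the Theorem~\ref{Theorem:14} pairing ($a=Dx$, $b=Nx$), I apply Lemma~\ref{Lemma:5} with $a=Dx$, $e=x$, $b=N^{\sharp_B}x$. The adjoint definition gives $\langle Nx,x\rangle_B=\langle x,N^{\sharp_B}x\rangle_B$, so the left side of the lemma is exactly $|\langle Dx,x\rangle_B|\,|\langle Nx,x\rangle_B|$, and one obtains
\[
2|\langle Dx,x\rangle_B|\,|\langle Nx,x\rangle_B|\le|\langle Dx,N^{\sharp_B}x\rangle_B|+\|Dx\|_B\|N^{\sharp_B}x\|_B.
\]
The key step is then the identity $\langle Dx,N^{\sharp_B}x\rangle_B=\langle NDx,x\rangle_B$ (the $\sharp_B$-adjoint rule with $y=Dx$, $z=x$), and a direct multiplication shows $ND=\left(\begin{array}{cc}O&YW\\ZX&O\end{array}\right)$, contributing the advertised $w_B$ term after supremum. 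AM-GM on the norm product gives $\|Dx\|_B\|N^{\sharp_B}x\|_B\le\frac{1}{2}\langle(D^{\sharp_B}D+NN^{\sharp_B})x,x\rangle_B$, and $D^{\sharp_B}D+NN^{\sharp_B}=\mbox{diag}(X^{\sharp_A}X+YY^{\sharp_A},\,W^{\sharp_A}W+ZZ^{\sharp_A})$ supplies the final $\frac{1}{2}\max\{\cdot,\cdot\}$ term.

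Summing the five contributions and taking $\sup_{\|x\|_B=1}$ yields the claimed inequality. The only subtle point is the Buzano re-pairing: without the substitution $b=N^{\sharp_B}x$ and the consequent identity $\langle Dx,N^{\sharp_B}x\rangle_B=\langle NDx,x\rangle_B$, one would instead obtain $N^{\sharp_B}D$ as in Theorem~\ref{Theorem:14}, which is precisely the wrong block operator for the bound being proved. Everything else is routine matrix arithmetic and invocation of Lemma~\ref{Lemma:7}.
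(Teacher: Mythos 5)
Your proposal is correct and follows essentially the same route as the paper: the decomposition $T=D+N$, Buzano's inequality (Lemma \ref{Lemma:5}) applied to the term $|\langle Nx,x\rangle_B|^{2}=|\langle Nx,x\rangle_B\langle x,N^{\sharp_B}x\rangle_B|$ and to the cross term with the pairing $a=Dx$, $e=x$, $b=N^{\sharp_B}x$, followed by the identities $\langle Dx,N^{\sharp_B}x\rangle_B=\langle NDx,x\rangle_B$ and $\langle Nx,N^{\sharp_B}x\rangle_B=\langle N^{2}x,x\rangle_B$, AM--GM, and Lemma \ref{Lemma:7}. The block computations and the resulting five terms match the paper's argument exactly, so there is nothing to add.
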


\begin{proof}
Let $x$ be a B-unit vector in $\mathcal{H} \oplus \mathcal{H}.$ Then 
\begin{eqnarray*}
	 \bigg|\bigg\langle \left(\begin{array}{cc}
	X&Y \\
	Z&W
\end{array}\right)x, x \bigg\rangle_B\bigg|^2
 &=& \bigg|\bigg\langle \bigg[\left(\begin{array}{cc}
	X&O \\
	O&W
\end{array}\right) + \left(\begin{array}{cc}
	O&Y \\
	Z&O
\end{array}\right)\bigg] x, x \bigg\rangle_B\bigg|^2\\
&=& \bigg|\bigg\langle \left(\begin{array}{cc}
	X&O \\
	O&W
\end{array}\right)x, x \bigg\rangle_B + \bigg\langle \left(\begin{array}{cc}
	O&Y \\
	Z&O
\end{array}\right)x, x \bigg\rangle_B  \bigg|^2 \\
&\leq& \bigg|\bigg\langle \left(\begin{array}{cc}
	X&O \\
	O&W
\end{array}\right)x, x \bigg\rangle_B\bigg|^2 +  \bigg|\bigg\langle \left(\begin{array}{cc}
	O&Y \\
	Z&O
\end{array}\right)x, x \bigg\rangle_B\bigg|^2 \\
&&+ 2  \bigg|\bigg\langle \left(\begin{array}{cc}
	X&O \\
	O&W
\end{array}\right)x, x \bigg\rangle_B\bigg\langle \left(\begin{array}{cc}
	O&Y \\
	Z&O
\end{array}\right)x, x \bigg\rangle_B\bigg|\\
&=&  \bigg|\bigg\langle \left(\begin{array}{cc}
	X&O \\
	O&W
\end{array}\right)x, x \bigg\rangle_B\bigg|^2 \\
&& + \bigg|\bigg\langle \left(\begin{array}{cc}
	O&Y \\
	Z&O
\end{array}\right)x, x \bigg\rangle_B \bigg\langle x,  \left(\begin{array}{cc}
O&Z^{\sharp_A} \\
Y^{\sharp_A}&O
\end{array}\right)x \bigg\rangle_B\bigg|\\
&& + 2 \bigg|\bigg\langle \left(\begin{array}{cc}
	X&O \\
	O&W
\end{array}\right)x, x \bigg\rangle_B \bigg\langle x,   \left(\begin{array}{cc}
O&Z^{\sharp_A} \\
Y^{\sharp_A}&O
\end{array}\right)x\bigg \rangle_B\bigg|
\end{eqnarray*}

\begin{eqnarray*}
&\leq& \bigg|\bigg\langle \left(\begin{array}{cc}
	X&O \\
	O&W
\end{array}\right)x, x \bigg\rangle_B\bigg|^2 \\
&& + \frac{1}{2} \bigg[\bigg\|\left(\begin{array}{cc}
	O&Y \\
	Z&O
\end{array}\right)x\bigg\|_B \bigg\| \left(\begin{array}{cc}
O&Z^{\sharp_A} \\
Y^{\sharp_A}&O
\end{array}\right)x\bigg\|_B + \bigg|\bigg\langle  \left(\begin{array}{cc}
O&Y \\
Z&O
\end{array}\right)x, \left(\begin{array}{cc}
O&Z^{\sharp_A} \\
Y^{\sharp_A}&O
\end{array}\right)x\bigg\rangle_B\bigg|\bigg] \\
&& + \bigg\|\left(\begin{array}{cc}
	X&O \\
	O&W
\end{array}\right)x\bigg\|_B \bigg\| \left(\begin{array}{cc}
	O&Z^{\sharp_A} \\
	Y^{\sharp_A}&O
\end{array}\right)x\bigg\|_B +\bigg| \bigg\langle  \left(\begin{array}{cc}
X&O \\
O&W
\end{array}\right)x, \left(\begin{array}{cc}
	O&Z^{\sharp_A} \\
	Y^{\sharp_A}&O
\end{array}\right)x\bigg\rangle_B \bigg| ,\\
&&~~\,\,\,\,\,\,\,\,\,\,\,\,\,\,\,\,\,\,\,\,\,\,\,\,\,\,\,\,\,\,\,\,\,\,\,\,\,\,\,\,\,\,\,\,\,\,\,\,\,\,\,\,\,\,\,\,\,\,\,\,\,\,\,\,\,\,\,\,\,\,\,\,\,\,\,\,\,\,\,\,\,\,\,\,\,\,\,\,\,\,\,\,\,\,\,\,\,\,\,\,\,\,\,\,\,\,\,\,\,\,\,\,\,\,\,\,\,\,\,\,\,\,\,\,\,\,\mbox{by Lemma \ref{Lemma:3}}\\
&=& \bigg|\bigg\langle \left(\begin{array}{cc}
	X&O \\
	O&W
\end{array}\right)x, x \bigg\rangle_B\bigg|^2 \\
&& + \frac{1}{2} \bigg\langle \left(\begin{array}{cc}
	Z^{\sharp_A}Z&O \\
	O&Y^{\sharp_A}Y
\end{array}\right)x, x \bigg\rangle^{\frac{1}{2}}_B  \bigg\langle \left(\begin{array}{cc}
YY^{\sharp_A}&O \\
O&ZZ^{\sharp_A}
\end{array}\right)x, x \bigg\rangle^{\frac{1}{2}}_B + \frac{1}{2}  \bigg|\bigg\langle \left(\begin{array}{cc}
	YZ&O \\
	O&ZY
\end{array}\right)x, x \bigg\rangle_B \bigg| \\
&& +  \bigg\langle \left(\begin{array}{cc}
	X^{\sharp_A}X&O \\
	O&W^{\sharp_A}W
\end{array}\right)x, x \bigg\rangle_B^{\frac{1}{2}}  \bigg\langle \left(\begin{array}{cc}
YY^{\sharp_A}&O \\
O&ZZ^{\sharp_A}
\end{array}\right)x, x \bigg\rangle^{\frac{1}{2}}_B  +  \bigg|\bigg\langle \left(\begin{array}{cc}
	O&YW \\
ZX&O
\end{array}\right)x, x \bigg\rangle_B\bigg|\\
\end{eqnarray*}
\begin{eqnarray*}
&\leq& \bigg|\bigg\langle \left(\begin{array}{cc}
	X&O \\
	O&W
\end{array}\right)x, x \bigg\rangle_B\bigg|^2 \\
&& + \frac{1}{4}  \bigg\langle \left(\begin{array}{cc}
	Z^{\sharp_A}Z+ YY^{\sharp_A}&O \\
	O&Y^{\sharp_A}Y + ZZ^{\sharp_A}
\end{array}\right)x, x \bigg\rangle_B +  \frac{1}{2} \bigg| \bigg\langle \left(\begin{array}{cc}
	YZ&O \\
	O&ZY
\end{array}\right)x, x \bigg\rangle_B \bigg|\\
&& + \frac{1}{2}  \bigg\langle \left(\begin{array}{cc}
	X^{\sharp_A}X + YY^{\sharp_A}&O \\
	O& W^{\sharp_A}W + ZZ^{\sharp_A}
\end{array}\right)x, x \bigg\rangle_B + \bigg| \bigg\langle \left(\begin{array}{cc}
	O&YW \\
	ZX&O
\end{array}\right)x, x \bigg\rangle_B\bigg|,\\
&&~~\,\,\,\,\,\,\,\,\,\,\,\,\,\,\,\,\,\,\,\,\,\,\,\,\,\,\,\,\,\,\,\,\,\,\,\,\,\,\,\,\,\,\,\,\,\,\,\,\,\,\,\,\,\,\,\,\,\,\,\,\,\,\,\,\,\,\,\,\,\,\,\,\,\,\,\,\,\,\,\,\,\,\,\,\,\,\,\,\,\,\,\,\,\,\,\,\,\,\,\,\,\,\,\,\,\,\,\,\,\,\,\,\,\,\,\,\,\,\,\,\,\,\,\,\,\,\mbox{by AM-GM inequality}.
\end{eqnarray*}
Taking supremum over $\|x\|_B = 1$ and then using Lemma \ref{Lemma:7}, we get the desired result.
\end{proof}

Considering $ W=X,$ $ Z=Y$ in Theorem \ref{Theorem:15} and using Lemma \ref{Lemma:7}, we get the following corollary.

\begin{cor}
	Let $X, Y \in \mathcal{B}_A(\mathcal{H}).$ Then
	\begin{eqnarray*}
 \max \big\{w^2_A(X+Y), w^2_A(X-Y)\big\} &\leq& w_A^2(X) + \frac{1}{4}\big\|YY^{\sharp_A} + Y^{\sharp_A}Y\big\|_A\\
	&& + \frac{1}{2} w_A(Y^2) + \frac{1}{2} \big\| X^{\sharp_A}X + YY^{\sharp_A}\big\|_A + w_A(YX).
	\end{eqnarray*}
\end{cor}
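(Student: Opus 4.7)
The corollary is simply the specialization $X = W$, $Y = Z$ of Theorem \ref{Theorem:15}, combined with several of the identities in Lemma \ref{Lemma:7}. So the plan is to substitute those values into the bound supplied by Theorem \ref{Theorem:15} and collapse each term.

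On the left-hand side, after substitution the matrix becomes $\left(\begin{smallmatrix} X & Y \\ Y & X \end{smallmatrix}\right)$, and Lemma \ref{Lemma:7}(iv) immediately rewrites its $B$-numerical radius (squared) as $\max\{w_A^2(X+Y),w_A^2(X-Y)\}$, which is exactly the left-hand side of the claimed inequality. On the right-hand side, I would process the five terms in the bound of Theorem \ref{Theorem:15} one by one.

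First, $\max\{w_A^2(X),w_A^2(W)\}$ collapses to $w_A^2(X)$. Second, $\max\{w_A(YZ),w_A(ZY)\}$ becomes $w_A(Y^2)$, producing the $\tfrac{1}{2} w_A(Y^2)$ term. Third, the off-diagonal $B$-numerical radius $w_B\!\left(\begin{smallmatrix} O & YW \\ ZX & O \end{smallmatrix}\right)$ turns into $w_B\!\left(\begin{smallmatrix} O & YX \\ YX & O \end{smallmatrix}\right)$; applying the ``in particular'' clause of Lemma \ref{Lemma:7}(iv) reduces this to $w_A(YX)$. Fourth, the max inside the $\tfrac{1}{4}$ term becomes $\|YY^{\sharp_A}+Y^{\sharp_A}Y\|_A$ (both arguments of the max coincide after substitution). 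Fifth, the max inside the $\tfrac{1}{2}$ term collapses to $\|X^{\sharp_A}X+YY^{\sharp_A}\|_A$ for the same reason.

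Assembling these five simplified summands yields exactly the right-hand side of the corollary, completing the proof. There is no real obstacle here: the only thing to be careful about is keeping track of which half of each max survives once $W = X$ and $Z = Y$, and recognizing that $w_B\!\left(\begin{smallmatrix} O & YX \\ YX & O \end{smallmatrix}\right) = w_A(YX)$ is the diagonal-symmetric case of Lemma \ref{Lemma:7}(iv) rather than the generic off-diagonal case.
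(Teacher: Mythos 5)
Your proposal is correct and is exactly the paper's argument: the paper derives this corollary by setting $X=W$, $Y=Z$ in Theorem \ref{Theorem:15} and simplifying each term via Lemma \ref{Lemma:7}, precisely as you do. Each of your five term-by-term reductions (including the use of the ``in particular'' clause of Lemma \ref{Lemma:7}(iv) for the off-diagonal term $w_B$ of $\left(\begin{smallmatrix} O & YX \\ YX & O \end{smallmatrix}\right)$) checks out.
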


\begin{remark}
Consider $T = \left(\begin{array}{cc}
	X&Y \\
	Z&W
	\end{array}\right)$, where  $X =Z = W = O, Y= \left(\begin{array}{cc}
	0&1 \\
	0&0
	\end{array}\right)$ and $ A= I,$ then the bound obtained  in \cite[Th. 3.7]{Rout} gives $w^2_B(T) \leq 1$,  whereas the bound in Theorem \ref{Theorem:15} gives $w^2_B(T) \leq \frac{3}{4}$. Therefore, for this operator matrix T, Theorem \ref{Theorem:15} gives a better bound than that in \cite[Th. 3.7]{Rout}.
\end{remark}

\noindent \textbf{Incomparability of Theorem \ref{Theorem:14} and Theorem \ref{Theorem:15}.}
If we consider $X=W=O$ and $Y=Z$ then it follows from (\ref{1b}) that Theorem \ref{Theorem:14} gives better bound  than that in Theorem \ref{Theorem:15}.
Again, if we consider $A=I$ and $ X =(\frac{1}{2}), Y =(1), Z=W=(0) $ then for the operator matrix  $T = \left(\begin{array} {cc}
	X&Y \\
	Z&W
\end{array}\right)$  Theorem \ref{Theorem:14} gives $w^2_B(T)\leq \frac{10}{8}$, whereas Theorem \ref{Theorem:15} gives $w^2_B(T)\leq \frac{9}{8}.$ Therefore, for this operator matrix, Theorem \ref{Theorem:15} gives better bound  than that in Theorem \ref{Theorem:14}.
Thus, bounds obtained in Theorem \ref{Theorem:14} and Theorem \ref{Theorem:15} are not comparable, in general.\\

\bibliographystyle{amsplain}

\begin{thebibliography}{99}
	

\bibitem{Arias1} M.L. Arias, G. Corach and   M.C. Gonzalez, Partial isometries in semi-Hilbertian spaces, Linear Algebra Appl. 428 (2008) 1460-1475.	


\bibitem{Kittaneh:3} W. Bani-Domi and   F. Kittaneh, Norm and numerical radius inequalities for Hilbert space operators, Linear Multilinear Algebra (2020),
\url{https://doi.org/10.1080/03081087.2020.1798334}.



\bibitem{Rajejla} P. Bhunia, K. Paul and   R.K. Nayak, On inequalities for A-numerical radius of operators, Electron. J. Linear Algebra 36 (2020) 143-157.


\bibitem{RajAOT} P. Bhunia, R.K. Nayak and   K. Paul,  Refinements of A-numerical radius inequalities and their applications, Adv. Oper. Theory 5 (2020) 1498-1511.


\bibitem{Pintu1} P. Bhunia and   K. Paul,  Some  improvement of numerical radius inequalities of operators and operator matrices, Linear Multilinear Algebra (2020) \url{https://doi.org/10.1080/03081087.2020.1781037}.



\bibitem{Pintu2} P. Bhunia, K. Feki and   K. Paul,  A-Numerical radius orthogonality and parallelism of semi-Hilbertian space operators and their applications, Bull. Iran. Math. Soc. (2020) \url{https://doi.org/10.1007/s41980-020-00392-8}. 

\bibitem{Aniket} A. Bhanja, P. Bhunia and   K. Paul, On generalized Davis-Wielandt radius inequalities of semi-Hilbertian space operators, arXiv:2006.05069v1[math.FA].

\bibitem{BP} P. Bhunia and   K. Paul, Proper improvement of well-known numerical radius inequalities and their applications, \url{arXiv:2009.03206v1 [math.FA] }.

\bibitem{Douglas}  R.G. Douglas, On majorization, factorization and range inclusion of operators in Hilbert space, Proc. Amer. Math. Soc. 17 (1966) 413-416.


	
\bibitem{Kias} 	K. Feki, A note on the A-numerical radius of operators in semi-Hilbert spaces, Arch. Math. (2020) \url{https://doi.org/10.1007/s00013-020-01482-z}.



\bibitem{Kias2} K. Feki, Some A-numerical radius inequalities for $d \times d$ operator matrices,  arXiv:2003.14378 [math.FA]. 




\bibitem{Moslehian} M.S. Moslehian, Q. Xu and   A. Zamani, Seminorm and numerical radius inequalities of operators in semi-Hilbertian spaces, Linear Algebra Appl. 591 (2020) 299-321.	


\bibitem{SMY} M. Sattari, M.S. Moslehian and   T. Yamazaki, Some genaralized numerical radius inequalities for Hilbert space operators, Linear Algebra Appl. 470 (2015) 216-227.


\bibitem{Rout} N.C. Rout, S. Sahoo and  D. Mishra, On A-numerical radius inequalities for $ 2 \times 2 $  operator matrices, arxiv:2004.07494v1[math.FA].


\bibitem{Xu} Q. Xu, Z. Ye and A. Zamani, Some upper bounds for the A-numerical radius of  $ 2 \times 2 $  block matrices, Adv. Oper. Theory (2020), in press.


 \bibitem{Zamani} A. Zamani, A-numerical radius inequalities for semi-Hilbertian space operators, Linear Algebra Appl. 578 (2019) 159-183.	


\end{thebibliography}

\end{document}